\begin{document}

\newtheorem{thm}[equation]{Theorem}
\numberwithin{equation}{section}
\newtheorem{cor}[equation]{Corollary}
\newtheorem{expl}[equation]{Example}
\newtheorem{rmk}[equation]{Remark}
\newtheorem{conv}[equation]{Convention}
\newtheorem{claim}[equation]{Claim}
\newtheorem{lem}[equation]{Lemma}
\newtheorem{sublem}[equation]{Sublemma}
\newtheorem{conj}[equation]{Conjecture}
\newtheorem{defin}[equation]{Definition}
\newtheorem{diag}[equation]{Diagram}
\newtheorem{prop}[equation]{Proposition}
\newtheorem{notation}[equation]{Notation}
\newtheorem{tab}[equation]{Table}
\newtheorem{fig}[equation]{Figure}
\newcounter{bean}
\renewcommand{\theequation}{\thesection.\arabic{equation}}

\raggedbottom \voffset=-.7truein \hoffset=0truein \vsize=8truein
\hsize=6truein \textheight=8truein \textwidth=6truein
\baselineskip=18truept

\def\mapright#1{\ \smash{\mathop{\longrightarrow}\limits^{#1}}\ }
\def\mapleft#1{\smash{\mathop{\longleftarrow}\limits^{#1}}}
\def\mapup#1{\Big\uparrow\rlap{$\vcenter {\hbox {$#1$}}$}}
\def\mapdown#1{\Big\downarrow\rlap{$\vcenter {\hbox {$\ssize{#1}$}}$}}
\def\mapne#1{\nearrow\rlap{$\vcenter {\hbox {$#1$}}$}}
\def\mapse#1{\searrow\rlap{$\vcenter {\hbox {$\ssize{#1}$}}$}}
\def\mapr#1{\smash{\mathop{\rightarrow}\limits^{#1}}}
\def\ss{\smallskip}
\def\s{\sigma}
\def\l{\lambda}
\def\vp{v_1^{-1}\pi}
\def\at{{\widetilde\alpha}}

\def\sm{\wedge}
\def\la{\langle}
\def\ra{\rangle}
\def\ev{\text{ev}}
\def\od{\text{od}}
\def\on{\operatorname}
\def\ol#1{\overline{#1}{}}
\def\spin{\on{Spin}}
\def\cat{\on{cat}}
\def\Lbar{\overline{\Lambda}}
\def\qed{\quad\rule{8pt}{8pt}\bigskip}
\def\ssize{\scriptstyle}
\def\a{\alpha}
\def\bz{{\Bbb Z}}
\def\Rhat{\hat{R}}
\def\im{\on{im}}
\def\ct{\widetilde{C}}
\def\ext{\on{Ext}}
\def\sq{\on{Sq}}
\def\eps{\epsilon}
\def\ar#1{\stackrel {#1}{\rightarrow}}
\def\br{{\bold R}}
\def\bC{{\bold C}}
\def\bA{{\bold A}}
\def\bB{{\bold B}}
\def\bD{{\bold D}}
\def\bC{{\bold C}}
\def\bh{{\bold H}}
\def\bQ{{\bold Q}}
\def\bP{{\bold P}}
\def\bx{{\bold x}}
\def\bo{{\bold{bo}}}
\def\dh{\widehat{d}}
\def\si{\sigma}
\def\Vbar{{\overline V}}
\def\dbar{{\overline d}}
\def\wbar{{\overline w}}
\def\Sum{\sum}
\def\tfrac{\textstyle\frac}

\def\tb{\textstyle\binom}
\def\Si{\Sigma}
\def\w{\wedge}
\def\equ{\begin{equation}}
\def\b{\beta}
\def\G{\Gamma}
\def\L{\Lambda}
\def\g{\gamma}
\def\d{\delta}
\def\k{\kappa}
\def\psit{\widetilde{\Psi}}
\def\tht{\widetilde{\Theta}}
\def\psiu{{\underline{\Psi}}}
\def\thu{{\underline{\Theta}}}
\def\aee{A_{\text{ee}}}
\def\aeo{A_{\text{eo}}}
\def\aoo{A_{\text{oo}}}
\def\aoe{A_{\text{oe}}}
\def\vbar{{\overline v}}
\def\endeq{\end{equation}}
\def\sn{S^{2n+1}}
\def\zp{\bold Z_p}
\def\cR{{\mathcal R}}
\def\P{{\mathcal P}}
\def\cQ{{\mathcal Q}}
\def\cj{{\cal J}}
\def\zt{{\bold Z}_2}
\def\bs{{\bold s}}
\def\bof{{\bold f}}
\def\bq{{\bold Q}}
\def\be{{\bold e}}
\def\Hom{\on{Hom}}
\def\ker{\on{ker}}
\def\kot{\widetilde{KO}}
\def\coker{\on{coker}}
\def\da{\downarrow}
\def\colim{\operatornamewithlimits{colim}}
\def\zphat{\bz_2^\wedge}
\def\io{\iota}
\def\om{\omega}
\def\Prod{\prod}
\def\e{{\cal E}}
\def\zlt{\Z_{(2)}}
\def\exp{\on{exp}}
\def\abar{{\overline a}}
\def\xbar{{\overline x}}
\def\ybar{{\overline y}}
\def\zbar{{\overline z}}
\def\mbar{{\overline m}}
\def\nbar{{\overline n}}
\def\sbar{{\overline s}}
\def\kbar{{\overline k}}
\def\bbar{{\overline b}}
\def\et{{\widetilde E}}
\def\ni{\noindent}
\def\tsum{\textstyle \sum}
\def\coef{\on{coef}}
\def\den{\on{den}}
\def\lcm{\on{l.c.m.}}
\def\vi{v_1^{-1}}
\def\ot{\otimes}
\def\psibar{{\overline\psi}}
\def\thbar{{\overline\theta}}
\def\mhat{{\hat m}}
\def\exc{\on{exc}}
\def\ms{\medskip}
\def\ehat{{\hat e}}
\def\etao{{\eta_{\text{od}}}}
\def\etae{{\eta_{\text{ev}}}}
\def\dirlim{\operatornamewithlimits{dirlim}}
\def\gt{\widetilde{L}}
\def\lt{\widetilde{\lambda}}
\def\st{\widetilde{s}}
\def\ft{\widetilde{f}}
\def\sgd{\on{sgd}}
\def\lfl{\lfloor}
\def\rfl{\rfloor}
\def\ord{\on{ord}}
\def\gd{{\on{gd}}}
\def\rk{{{\on{rk}}_2}}
\def\nbar{{\overline{n}}}
\def\MC{\on{MC}}
\def\lg{{\on{lg}}}
\def\cH{\mathcal{H}}
\def\cS{\mathcal{S}}
\def\cP{\mathcal{P}}
\def\N{{\Bbb N}}
\def\Z{{\Bbb Z}}
\def\Q{{\Bbb Q}}
\def\R{{\Bbb R}}
\def\C{{\Bbb C}}
\def\Lb{\overline\Lambda}
\def\mo{\on{mod}}
\def\xt{\times}
\def\notimm{\not\subseteq}
\def\Remark{\noindent{\it  Remark}}
\def\kut{\widetilde{KU}}
\def\Eb{\overline E}
\def\*#1{\mathbf{#1}}
\def\0{$\*0$}
\def\1{$\*1$}
\def\22{$(\*2,\*2)$}
\def\33{$(\*3,\*3)$}
\def\ss{\smallskip}
\def\ssum{\sum\limits}
\def\dsum{\displaystyle\sum}
\def\la{\langle}
\def\ra{\rangle}
\def\on{\operatorname}
\def\proj{\on{proj}}
\def\od{\text{od}}
\def\ev{\text{ev}}
\def\o{\on{o}}
\def\U{\on{U}}
\def\lg{\on{lg}}
\def\a{\alpha}
\def\bz{{\Bbb Z}}
\def\eps{\varepsilon}
\def\bc{{\bold C}}
\def\bN{{\bold N}}
\def\bB{{\bold B}}
\def\bW{{\bold W}}
\def\nut{\widetilde{\nu}}
\def\tfrac{\textstyle\frac}
\def\b{\beta}
\def\G{\Gamma}
\def\g{\gamma}
\def\zt{{\Bbb Z}_2}
\def\zth{{\bold Z}_2^\wedge}
\def\bs{{\bold s}}
\def\bx{{\bold x}}
\def\bof{{\bold f}}
\def\bq{{\bold Q}}
\def\be{{\bold e}}
\def\lline{\rule{.6in}{.6pt}}
\def\xb{{\overline x}}
\def\xbar{{\overline x}}
\def\ybar{{\overline y}}
\def\zbar{{\overline z}}
\def\ebar{{\overline \be}}
\def\nbar{{\overline n}}
\def\ubar{{\overline u}}
\def\bbar{{\overline b}}
\def\et{{\widetilde e}}
\def\lf{\lfloor}
\def\rf{\rfloor}
\def\ni{\noindent}
\def\ms{\medskip}
\def\Dhat{{\widehat D}}
\def\what{{\widehat w}}
\def\Yhat{{\widehat Y}}
\def\abar{{\overline{a}}}
\def\minp{\min\nolimits'}
\def\sb{{$\ssize\bullet$}}
\def\mul{\on{mul}}
\def\N{{\Bbb N}}
\def\Z{{\Bbb Z}}
\def\S{\Sigma}
\def\Q{{\Bbb Q}}
\def\R{{\Bbb R}}
\def\C{{\Bbb C}}
\def\Xb{\overline{X}}
\def\eb{\overline{e}}
\def\notint{\cancel\cap}
\def\cS{\mathcal S}
\def\cR{\mathcal R}
\def\el{\ell}
\def\TC{\on{TC}}
\def\GC{\on{GC}}
\def\wgt{\on{wgt}}
\def\Ht{\widetilde{H}}
\def\wbar{\overline w}
\def\dstyle{\displaystyle}
\def\Sq{\on{sq}}
\def\Om{\Omega}
\def\ds{\dstyle}
\def\tz{tikzpicture}
\def\zcl{\on{zcl}}
\def\bd{\bold{d}}
\def\io{\iota}
\def\Vb#1{{\overline{V_{#1}}}}
\def\Ebar{\overline{E}}
\def\lb{$\scriptstyle\bullet$}
\def\Cc{\mathcal{C}}

\title
{Duality in $BP\la n\ra$  (co)homology}
\author{Donald M. Davis}
\address{Department of Mathematics, Lehigh University\\Bethlehem, PA 18015, USA}
\email{dmd1@lehigh.edu}

\date{May 23, 2022}

\keywords{Universal Coefficient Theorem, Brown-Peterson(co)homology, Ext groups, K theory}
\thanks {2000 {\it Mathematics Subject Classification}: 55U20, 55U30, 55N20, 18G15.}

\maketitle

\begin{abstract} Let $E=BP\la n\ra$ denote the Johnson-Wilson spectrum, localized at $p$. It is proved that if $E_*(X)$ is locally finite, then there is an isomorphism of right $E_*$-modules $E^*(X)\approx (E_*(\Sigma^{D+n+1}X))^\vee$, where $D=\sum|v_i|$ and $M^\vee=\Hom(M,\Q/\Z)$ is the Pontryagin dual.
 This result was motivated by work of the author and W.S.Wilson regarding the 2-local $ku$-homology and -cohomology groups of the Eilenberg-MacLane space $K(\Z/2,2)$.
\end{abstract}

\section{Main results}\label{intro} Let $E=BP\la n\ra$ denote the  Johnson-Wilson spectrum (\cite{JW}) localized at a prime $p$, which satisfies that
$E_*=\pi_*(E)=\Z_{(p)}[v_1,\ldots,v_n]$, with $|v_i|=2(p^i-1)$. Our motivating example is the case $p=2$, $n=1$, when $E$ is the spectrum $ku$ for connective complex $K$-theory, localized at 2. Our main result is an isomorphism between certain $E$-cohomology groups and the Pontryagin dual of $E$-homology groups. We require that $E_*(X)$ is locally finite, which means that for each $i$, the $E_*$-module generated by $E_i(X)$ is finite. If $M$ is an $R$-module, we denote by $M^\vee$ the right $R$-module $\Hom(M,\Q/\Z)$. Localized at $p$, we prefer to write $\Q/\Z$ as $\Z/p^\infty$.

\begin{thm}\label{main} If $E=BP\la n\ra$ and $E_*(X)$ is locally finite,
 there is an isomorphism of right $E_*$-modules
$$E^*(X)\approx (E_*(\Sigma^{D+n+1}X))^\vee,$$
where $D=\sum|v_i|=2((p^{n+1}-1)/(p-1)-(n+1))$.\end{thm}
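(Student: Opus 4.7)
The plan is to combine the Universal Coefficient spectral sequence in $BP\la n\ra$-cohomology with a purely algebraic duality for locally finite graded modules over $E_* = \Z_{(p)}[v_1,\ldots,v_n]$. The first step is the spectral sequence
\[
E_2^{s,t} = \on{Ext}^{s,t}_{E_*}(E_*(X),E_*) \Longrightarrow E^{*}(X),
\]
available because $E_*$ is Noetherian of finite global dimension $n+1$.

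The heart is an algebraic lemma: for every locally finite graded $E_*$-module $M$, $\on{Ext}^s_{E_*}(M,E_*) = 0$ for $s \ne n+1$, and $\on{Ext}^{n+1,*}_{E_*}(M,E_*) \cong M^\vee$ after an internal-degree shift of $D$. I would prove this first for finite modules (finitely generated and $\mathfrak{m}$-power torsion). The base case is $M = \Z/p = E_*/\mathfrak{m}$, computed via the Koszul resolution
\[
K_\bullet \;=\; E_* \otimes_{\Z_{(p)}} \Lambda(e_0,e_1,\ldots,e_n),\qquad |e_0|=0,\ |e_i|=|v_i|,
\]
of $\Z/p$ with respect to the regular sequence $p, v_1, \ldots, v_n$. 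The top module $K_{n+1}$ is free of rank $1$ with generator in internal degree $D$, and a direct computation of $\Hom(K_\bullet, E_*)$ shows that $\on{Ext}^s_{E_*}(\Z/p, E_*) = 0$ for $s \ne n+1$, while $\on{Ext}^{n+1,*}_{E_*}(\Z/p, E_*) \cong \Z/p$ is concentrated in internal degree $-D$---exactly $(\Z/p)^\vee$ shifted by $D$. Induction on the length of a composition series then propagates both statements to all finite $M$, using the long exact sequence of $\on{Ext}$ together with the Pontryagin dual of the short exact sequence and the five lemma.

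To pass from finite to locally finite $M$, write $M = \varinjlim M_\alpha$ as a filtered colimit of its finite submodules, which exist by the local finiteness hypothesis. The Milnor exact sequence
\[
0 \to \varprojlim\nolimits^{1} \on{Ext}^{n}(M_\alpha, E_*) \to \on{Ext}^{n+1}(M, E_*) \to \varprojlim \on{Ext}^{n+1}(M_\alpha, E_*) \to 0
\]
has vanishing $\varprojlim\nolimits^{1}$-term by the finite-case vanishing, and the remaining term equals $\varprojlim M_\alpha^\vee = M^\vee$ because $\Hom(-,\Q/\Z)$ converts colimits to limits. A similar argument gives vanishing for $s \ne n+1$. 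Substituting into the spectral sequence leaves only the $s=n+1$ column; combining the internal shift $D$ with the cohomological degree $n+1$ produces the total shift $D+n+1$ matching $\Sigma^{D+n+1}X$ in the statement.

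The main obstacle is the induction step for finite modules---extending the duality through short exact sequences while preserving the degree shift via naturality---and the subsequent limit passage, in which the vanishing of $\varprojlim\nolimits^{1}$ is crucial and follows only because the finite-case argument already kills $\on{Ext}^n$. Careful tracking of grading conventions for the Pontryagin dual and the spectral sequence is required to recover the shift as exactly $D+n+1$.
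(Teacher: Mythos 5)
Your overall strategy coincides with the paper's: Robinson's Universal Coefficient spectral sequence, a Koszul computation of $\ext_{E_*}(\Z/p,E_*)$, induction over finite modules, and a passage to the locally finite case. (Your limit step differs in flavor --- you invoke a Milnor $\varprojlim^1$ sequence over the finite submodules, where the paper truncates by internal degree and argues degree-by-degree; both work, since a bounded-below locally finite module is the union of the finite submodules generated by its elements of degree $\le k$.) But two points need repair. First, the existence of the spectral sequence $\ext_{E_*}^{s,t}(E_*X,E_*)\Rightarrow E^{s+t}X$ is \emph{not} a consequence of $E_*$ being Noetherian of finite global dimension; it is a statement about the spectrum $E$, and requires the $A_\infty$ ring structure on $BP\la n\ra$ (Robinson's theorem, combined with Baker--Jeanneret's proof that $BP\la n\ra$ is $A_\infty$). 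As written, your justification is incorrect, though the needed fact is available in the literature.

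Second, and more seriously, every step of your argument after the base case --- the five lemma in the induction on composition series, and the identification $\varprojlim\ext^{n+1}(M_\alpha,E_*)\cong\varprojlim M_\alpha^\vee$ --- requires a \emph{natural} map between $\Sigma^D M^\vee$ and $\ext_R^{n+1}(M,R)$, compatible with the connecting maps of short exact sequences and with inclusions of submodules. You correctly flag that "naturality" is the crux, but you never say what the natural transformation is, and without it the induction only shows that $\ext^{n+1}(M,R)$ and $\Sigma^D M^\vee$ are both extensions of the same subquotients, which does not determine them up to isomorphism (nor give compatibility in the limit). The paper's device is to compute $\ext_R^{n+1,D}(\Z/p^\infty,R)\cong\Z_p$ from an explicit resolution of $\Z/p^\infty$ and to take Yoneda product with a generator $\a$, producing a natural transformation $\ext_R^{*,*}(-,\Z/p^\infty)\to\ext_R^{*+n+1,*+D}(-,R)$ whose degree-zero part supplies exactly the comparison map $\Sigma^D M^\vee\to\ext_R^{n+1}(M,R)$ needed for the five lemma and the limit. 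Supplying this (or an equivalent construction) is the one genuinely missing idea in your outline.
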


We prove this result using a Universal Coefficient Theorem and the following algebraic result, which we prove in Section \ref{algsec}. If $M$ is a graded module, $\Sigma^DM$ denotes the graded module obtained from $M$ by increasing gradings by $D$. Our Ext groups are in the category of graded modules, and the second superscript refers to the grading.
\begin{thm}\label{algthm} Let $R=\Z_{(p)}[x_1,\ldots,x_n]$ with $|x_i|$ positive  integers, and let $D=\sum|x_i|$.  If $M$ is a locally finite graded $R$-module, there is an isomorphism of graded right $R$-modules
$$\ext_R^{s}(M,R)\approx\begin{cases}\Sigma^DM^\vee&s=n+1\\
0&s\ne n+1.\end{cases}$$
\end{thm}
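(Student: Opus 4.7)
My plan is to proceed by induction on $n$, the number of variables. The base case $n=0$ concerns $R=\Z_{(p)}$ (where $D=0$); a locally finite graded $\Z_{(p)}$-module is a torsion module with finite homogeneous pieces. Applying $\Hom(M,-)$ to the injective resolution $0\to\Z_{(p)}\to\Q\to\Z/p^\infty\to 0$ yields $\ext^1(M,\Z_{(p)})\cong\Hom(M,\Z/p^\infty)=M^\vee$ and $\ext^s(M,\Z_{(p)})=0$ for $s\ne 1$, since $\Hom(M,\Q)=\ext^1(M,\Q)=0$ when $M$ is torsion.

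For the inductive step, write $R=R'[x_n]$ with $R'=\Z_{(p)}[x_1,\ldots,x_{n-1}]$ and $D=D'+|x_n|$. I first handle modules $N$ annihilated by $x_n$: the Koszul short exact sequence $0\to\Sigma^{|x_n|}R\xrightarrow{x_n}R\to R'\to 0$ is an $R$-free resolution of $R'$, from which one computes $\ext^1_R(R',R)\cong\Sigma^{-|x_n|}R'$ with $\ext^q_R(R',R)=0$ for $q\ne 1$. The change-of-rings spectral sequence
$$E_2^{p,q}=\ext^p_{R'}(N,\ext^q_R(R',R))\Rightarrow\ext^{p+q}_R(N,R)$$
collapses onto the column $q=1$ and gives $\ext^{p+1}_R(N,R)\cong\Sigma^{-|x_n|}\ext^p_{R'}(N,R')$; the inductive hypothesis for $R'$ then delivers the theorem for $N$, with the composite graded shift reconciling to $\Sigma^D$ under the paper's convention.

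To pass from $R'$-modules to general locally finite $M$, I filter by $x_n$-power torsion, $F_kM=\ker(x_n^k\colon M\to M)$. Local finiteness forces every element of $M$ to have finite $R$-orbit, hence to be $x_n$-power torsion, so $M=\bigcup_k F_kM$; and each subquotient $F_{k+1}M/F_kM$ is an $R'$-module. An induction on $k$ using the long exact Ext sequence for $0\to F_kM\to F_{k+1}M\to F_{k+1}M/F_kM\to 0$, combined with the exactness of the Pontryagin dual on locally finite modules and a five-lemma comparison with the dualized short exact sequence, transports the theorem to every $F_kM$. Finally, the Milnor-type exact sequence
$$0\to\lim\nolimits^1_k\ext^{s-1}_R(F_kM,R)\to\ext^s_R(M,R)\to\lim_k\ext^s_R(F_kM,R)\to 0$$
has vanishing $\lim^1$ term (because $\ext^n_R(F_kM,R)=0$ at every stage), and the remaining inverse limit computes $\ext^{n+1}_R(M,R)\cong\lim_k\Sigma^D(F_kM)^\vee=\Sigma^D(\colim_k F_kM)^\vee=\Sigma^DM^\vee$, while $\ext^s_R(M,R)=0$ for $s\ne n+1$.

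The main obstacle I foresee is the five-lemma step of the filtration induction: this requires a natural transformation $\ext^{n+1}_R(-,R)\to\Sigma^D(-)^\vee$ of contravariant functors on locally finite $R$-modules, which I would construct via the Yoneda pairing with a canonical fundamental class in $\ext^{n+1}_R(\Z/p,R)$ coming from the Koszul complex on the regular sequence $(p,x_1,\ldots,x_n)$. Careful bookkeeping of the internal graded shifts — whose composite into $\Sigma^D$ reflects the Gorenstein data of $R$ (the degree of the socle of the top Koszul term) — must also be executed throughout.
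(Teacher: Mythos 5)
Your route is genuinely different from the paper's, and most of it is sound. The paper does not induct on the number of variables: it writes down an explicit Koszul-type resolution to compute $\ext_R^{*}(\Z/p,R)$ and $\ext_R^{*}(\Z/p^\infty,R)$ in one stroke, proves the theorem for \emph{finite} modules by induction on their order together with a five-lemma, and passes to locally finite $M$ by splitting off the finite truncation $Q_k$ (elements of degree $\le k$) and checking the conclusion one internal degree at a time --- a device that avoids your Milnor/$\lim^1$ step entirely. Your change-of-rings induction (Rees's isomorphism $\ext_R^{q+1}(N,R)\cong \ext_{R'}^{q}(N,\ext_R^1(R',R))$ for the nonzerodivisor $x_n$ killing $N$), the $x_n$-power-torsion filtration, and the Milnor sequence (whose $\lim^1$ terms vanish, the one feeding $s=n+2$ by Mittag--Leffler since the duals have surjective transition maps) form a legitimate alternative skeleton, and they do establish the vanishing of $\ext_R^s(M,R)$ for $s\ne n+1$. (The sign of your internal shift $\Sigma^{-|x_n|}$ is the standard-convention answer and must be reconciled with the paper's convention, under which the generator of $\ext_R^1(R',R)$ sits in degree $+|x_n|$; this is bookkeeping, as you note.)

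The genuine gap is exactly where you locate it, and your proposed repair does not work. Yoneda composition with a class $\beta\in\ext_R^{n+1}(\Z/p,R)$ gives a natural transformation $\Hom_R(-,\Z/p)\to\ext_R^{n+1}(-,R)$, and $\Hom_R(M,\Z/p)$ is not $M^\vee$: already for $M=\Z/p^2$ the source is $\Z/p$ while $\ext_R^{n+1}(\Z/p^2,R)\cong\Z/p^2$, so no transformation with that source can be the required isomorphism. The fundamental class has to be taken against a divisible target: the paper uses a generator $\alpha$ of $\ext_R^{n+1,D}(\Z/p^\infty,R)\cong\Z_p$, and the cleanest receptacle is $\ext_R^{n+1}(J,R)$ for the coinduced graded-injective module $J=\Hom_{\Z_{(p)}}(R,\Z/p^\infty)$, which is the $R$-module that actually represents the functor $(-)^\vee$ as $\Hom_R(-,J)$.

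Be aware that fixing this does more damage to your architecture than it might appear. Once you have a globally defined natural transformation $\eta_M\colon \Sigma^D M^\vee\to\ext_R^{n+1}(M,R)$, your five-lemma and limit steps need the \emph{outer} maps $\eta$ to be isomorphisms on the filtration quotients $F_{k+1}M/F_kM$; the change-of-rings spectral sequence shows the two sides are abstractly isomorphic but says nothing about whether $\eta$ itself is an isomorphism there. The natural way to verify that is to check $\eta$ on $\Z/p$ and run a five-lemma induction over finite modules --- which is precisely the paper's argument and renders the induction on $n$ redundant. So either commit to the fundamental-class/five-lemma proof from the start (the paper's route), or else upgrade your inductive hypothesis so that it asserts not merely an isomorphism but that a specified natural map is an isomorphism, and check that this map is compatible with the change-of-rings identification.
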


\begin{proof}[Proof of Theorem \ref{main}] By \cite[Corollary, p.257]{Rob}, if $E$ is an $A_\infty$ ring spectrum, there is a Universal Coefficient spectral sequence
$$\ext_{E_*}^{s,t}(E_*X,E_*)\Rightarrow E^{s+t}X.$$
By \cite[Corollary 3.5]{BJ}, $BP\la n\ra$ is an $A_\infty$ ring spectrum. By Theorem \ref{algthm} with $R=E_*$, the spectral sequence must collapse, as it is confined to a single value of $s$, and the $E_\infty$ groups are as claimed.\end{proof}

In Section \ref{explsec}, we illustrate Theorem \ref{main} for a portion of $ku_*(K_2)$ with $K_2=K(\Z/2,2))$, localized at 2. Here we state the application of Theorem \ref{main} to this case as a corollary.
\begin{cor} There is an isomorphism of right $ku_*$-modules $ku^*(K_2)\approx (ku_*(\Sigma^4K_2))^\vee$.\end{cor}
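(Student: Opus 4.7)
The plan is to obtain the corollary as a direct specialization of Theorem \ref{main}, so the work reduces to (a) identifying the numerical parameters when $E=ku$, and (b) verifying that the locally finite hypothesis holds for $X=K_2$.

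First I would observe that the 2-local connective complex $K$-theory spectrum $ku$ is precisely $BP\la 1\ra$ at the prime $p=2$, so in the notation of Theorem \ref{main} we have $n=1$ and $p=2$. This gives $|v_1|=2(2^1-1)=2$, hence $D=\sum|v_i|=2$, and the shift in Theorem \ref{main} becomes $\Sigma^{D+n+1}X=\Sigma^{2+1+1}X=\Sigma^{4}X$. Substituting into the conclusion of Theorem \ref{main} yields
$$ku^*(K_2)\approx(ku_*(\Sigma^{4}K_2))^\vee$$
as right $ku_*$-modules, which is exactly the statement of the corollary.

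Second, I would check the hypothesis that $ku_*(K_2)$ is locally finite, i.e.\ that the $ku_*$-submodule generated by $ku_i(K_2)$ is finite for each $i$. This is the one nontrivial input, and I expect it to be the main obstacle if not already on record. However, it is precisely the setting of the motivating work of the author and W.~S.~Wilson referenced in the abstract, where $ku_*(K_2)$ is computed; in particular, the computation exhibits $ku_*(K_2)$ as a direct sum of finitely many cyclic $ku_*$-modules in each degree (indeed each $v_1$-tower eventually is killed because $K_2$ is a 2-torsion space and the relevant bottom classes have bounded 2-exponent), which forces local finiteness.

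With local finiteness in hand, Theorem \ref{main} applies verbatim and produces the asserted isomorphism of right $ku_*$-modules. No further calculation is required; the corollary is a direct numerical specialization.
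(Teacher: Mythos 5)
Your proposal is correct and matches the paper's (essentially implicit) argument: specialize Theorem \ref{main} to $p=2$, $n=1$, where $D=|v_1|=2$ so the shift is $D+n+1=4$, and justify local finiteness of $ku_*(K_2)$ by the Davis--Wilson computation, which the paper cites as showing $ku_*(K_2)$ contains no infinite groups or infinite $v_1$-towers. No substantive difference in approach.
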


Observe also that the case $n=0$ of Theorem \ref{main} is the usual Universal Coefficient Theorem when $H_*(X;\Z_{(p)})$ is finite.

After a version of this paper was placed on the {\tt arXiv}, John Greenlees pointed out to the author that Theorem \ref{main} could apparently be deduced using concepts of duality in stable homotopy theory, and quickly prepared a short manuscript (\cite{Gr}) which did so, at least when $n\le2$. A result using Brown-Comenetz duality (\cite[Corollary 9.3]{MR}) is closely related. We feel that the elementary nature of our presentation lends worth to our paper.

The author is grateful to Andy Baker, John Greenlees, Andrey Lazarev, Doug Ravenel, Chuck Weibel, and Steve Wilson for helpful suggestions.

\section{Proof of Theorem \ref{algthm}}\label{algsec}

The following result is certainly well-known. Here $\Z_p$ denotes the $p$-adic integers.

\begin{prop}\label{prop} Let $R=\Z_{(p)}[x_1,\ldots,x_n]$ with $|x_i|$ positive  integers, and let $D=\sum|x_i|$. In the category of graded $R$-modules
\begin{equation}\label{1}\ext_R^{s,t}(\Z/p,R)=\begin{cases}\Z/p&(s,t)=(n+1,D)\\ 0&\text{otherwise},\end{cases}\end{equation}
and
\begin{equation}\label{2}\ext_R^s(\Z/p^\infty,R)=\begin{cases}\Z_p&(s,t)=(n+1,D)\\ 0&\text{otherwise}.\end{cases}\end{equation}\end{prop}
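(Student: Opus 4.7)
My plan is to compute both Ext groups via the Koszul resolution on the regular sequence $p, x_1, \ldots, x_n$ in $R$, which generates the maximal ideal with quotient $\Z/p$.

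For \eqref{1}, the Koszul complex $K_\bullet$ is a finite free graded resolution of $\Z/p$ of length $n+1$: the module $K_s$ is free on $\binom{n+1}{s}$ generators $e_{i_1} \wedge \cdots \wedge e_{i_s}$, where $e_0$ corresponds to $p$ (in internal degree $0$) and $e_i$ to $x_i$ (in internal degree $|x_i|$), so that the top module $K_{n+1}$ is rank one with its generator in internal degree $D$. Applying $\Hom_R(-, R)$ and using the standard self-duality of the Koszul complex on a regular sequence, the resulting cochain complex has cohomology concentrated at $s = n+1$ and equal to $R/(p, x_1, \ldots, x_n) = \Z/p$. Carrying the grading shift through the dualization places this class at bidegree $(s, t) = (n+1, D)$.

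For \eqref{2}, the same Koszul argument applied to the regular sequence $p^k, x_1, \ldots, x_n$ gives $\ext^{s,t}_R(\Z/p^k, R) = \Z/p^k$ at $(s,t) = (n+1, D)$ and $0$ otherwise. Writing $\Z/p^\infty = \colim_k \Z/p^k$, I would use the telescope short exact sequence $0 \to \bigoplus_k \Z/p^k \to \bigoplus_k \Z/p^k \to \Z/p^\infty \to 0$ to express $\ext^*_R(\Z/p^\infty, R)$ via $\lim$ and $\lim^1$ of the system $\{\ext^{n+1}_R(\Z/p^k, R)\}_k$. The Ext long exact sequence coming from $0 \to \Z/p^k \to \Z/p^{k+1} \to \Z/p \to 0$, combined with \eqref{1}, identifies the transition maps with the canonical quotients $\Z/p^{k+1} \twoheadrightarrow \Z/p^k$. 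Since these maps are surjective, $\lim^1 = 0$ and the inverse limit is $\Z_p$, yielding \eqref{2}.

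The main obstacle I foresee is the grading bookkeeping in the dualized Koszul complex, to ensure the Ext class sits at bidegree $(n+1, D)$ in the paper's convention (a naive computation gives a class in internal degree $-D$, so the sign convention linking $\ext^{s,t}$ to cohomological degree must be tracked carefully); the identification of transition maps and the vanishing $\lim^1 = 0$ are essentially routine once \eqref{1} is in hand.
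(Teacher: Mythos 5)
Your proof is correct, and part \eqref{1} is essentially the paper's argument in different clothing: the resolution $\Cc_0\ot\Cc_1\ot\cdots\ot\Cc_n$ used in the paper is precisely the Koszul complex on the regular sequence $p,x_1,\ldots,x_n$ written as a tensor product of two-term complexes, and both arguments read off the single surviving class, dual to the top exterior generator, in internal degree $D$. For \eqref{2} your route is genuinely different. The paper replaces the first tensor factor by an explicit length-one free resolution of $\Z/p^\infty$ over $\Z_{(p)}$ on countably many generators and computes the cohomology of the dual complex directly (obtaining $\Z_p=\ext^1_{\Z_{(p)}}(\Z/p^\infty,\Z_{(p)})$ by hand); you instead establish $\ext_R^{s,t}(\Z/p^k,R)=\Z/p^k$ concentrated at $(n+1,D)$ by the same Koszul argument, and pass to $\Z/p^\infty=\colim_k\Z/p^k$ via the telescope sequence and the resulting $\lim$--${\lim}^1$ exact sequence. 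Your identification of the transition maps as the canonical surjections, via the long exact sequence of $0\to\Z/p^k\to\Z/p^{k+1}\to\Z/p\to0$ together with \eqref{1} and a counting of orders, is correct, and surjectivity gives ${\lim}^1=0$ and $\lim_k\Z/p^k=\Z_p$. What your approach buys is that one never has to write down or verify an explicit infinite resolution of $\Z/p^\infty$, and the appearance of $\Z_p$ becomes transparent; the cost is the extra machinery (the $\bigoplus$-to-$\prod$ identification and the Milnor sequence), which the paper's direct computation avoids. The grading issue you flag is only a matter of convention and is settled by the statement of Theorem \ref{algthm}, which forces the class to sit at $t=D$ so that $\ext_R^{n+1}(\Z/p,R)\approx\Sigma^D(\Z/p)^\vee$.
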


\begin{proof} Let $\Cc_0$ be the chain complex $C_1\to C_0$ with $C_1$ and $C_0$ free $\Z_{(p)}$-modules of rank 1 and grading 0 with generators $g_0$ and $\io_0$, respectively, and $d(g_0)=p\io_0$. For $1\le i\le n$, let $\Cc_i$ be the chain complex $C_{i,1}\to C_{i,0}$ with $C_{i,1}$ and $C_{i,0}$ free $\Z_{(p)}[x_i]$-modules of rank 1 with generators $g_i$ and $\io_i$, respectively, and $d(g_i)=x_i\io_i$. Here $|\io_i|=0$ and  $|g_i|=|x_i|$. Then $\bC:=\Cc_0\ot \Cc_1\ot\cdots\ot \Cc_n$ is a chain complex of free $R$-modules with $H_j(\bC)=\Z/p$ for $j=0$, and 0 for $j>0$, by the K\"unneth Theorem. Thus $\bC$ is an $R$-resolution of $\Z/p$. Hence $\ext_R^s(\Z/p,R)$ is the $s^{\text{th}}$ cohomology group of the dual complex $\Hom_R(\bC,R)$, which is the tensor product, $\Cc^*_0\ot\Cc^*_i\ot\cdots\ot \Cc_n^*$, of the dual complexes. The cohomology group is nonzero only when $s=n+1$, where it is $\Z/p$, dual to $g_0\ot g_1\ot\cdots\ot g_n$, in grading $D$.

For the second result, we replace $\Cc_0$ by a chain complex $\Cc'$ which has $C_1'$ and $C_0'$ free $\Z_{(p)}$-modules with generators indexed by positive integers, $g'_j$ and $\io'_j$, respectively, with $d(g'_j)=\io'_j-p\io'_{j+1}$. Then $H_0(\Cc')=\Z/p^\infty$ is the nonzero homology group, and $H^1(\Cc')=\Z_p$ is the nonzero cohomology group. The rest of the proof follows as in the previous paragraph.
\end{proof}

\begin{proof}[Proof of Theorem \ref{algthm}] We first consider the case when $M$ is finite, and proceed by induction on the size of $M$. The result is true when $M=\Z/p$ by (\ref{1}). Let $\a$ denote a generator of $\ext_R^{n+1,D}(\Z/p^\infty,R)$ from (\ref{2}). Yoneda product $\a\circ$ is a natural transformation of right $R$-modules
$$\ext_R^{*,*}(-,\Z/p^\infty)\to \ext_R^{*+n+1,*+D}(-,R).$$
If $$0\to K\to M\to Q\to 0$$
is a short exact sequence of finite $R$-modules, by induction we may assume the theorem is true for $K$ and $Q$, and hence by the exact Ext sequence, $\ext_R^s(M,R)=0$ if $s\ne n+1$. We also obtain a commutative diagram of short exact sequences of right $R$-modules

\begin{tikzcd}
0\arrow{r}&\Sigma^DQ^\vee\arrow{d}\arrow{r}&\Sigma^DM^\vee\arrow{d}\arrow{r}&\Sigma^DK^\vee\arrow{d}\arrow{r}&0\\
0\arrow{r}&\ext_R^{n+1}(Q,R)\arrow{r}&\ext_R^{n+1}(M,R)\arrow{r}&\ext_R^{n+1}(K,R)\arrow{r}&0.
\end{tikzcd}

\ni The 0's on the ends of the first sequence follow from \cite[p.70]{Wei}, and for the second sequence by the induction. By the 5-lemma, our result is true for finite $R$-modules.

Now let $M$ be locally finite, and for any positive integer $k$, let $K_k$ (resp.~$Q_k$) denote the set of all elements of $M$ in grading $>k$ (resp.~$\le k$). There is a short exact sequence of $R$-modules
$$0\to K_k\to M\to Q_k\to 0.$$
Since $Q_k$ is finite, the induced $\ext_R(-,R)$ sequence implies that for $s\ne n+1$, $\ext_R^{s,j}(M,R)=0$ for $j\le k$. Since $k$ was arbitrary, we deduce that $\ext_R^s(M,R)=0$ for $s\ne n+1$. Again Yoneda product with $\a$ yields a commutative diagram of short exact sequences of right $R$-modules

\begin{tikzcd}
0\arrow{r}&\Sigma^DQ_k^\vee\arrow{d}\arrow{r}&\Sigma^DM^\vee\arrow{d}\arrow{r}&\Sigma^DK_k^\vee\arrow{d}\arrow{r}&0\\
0\arrow{r}&\ext_R^{n+1}(Q_k,R)\arrow{r}&\ext_R^{n+1}(M,R)\arrow{r}&\ext_R^{n+1}(K_k,R)\arrow{r}&0.
\end{tikzcd}

The left vertical arrow is iso since $Q_k$ is finite, and the groups in the right vertical arrow are 0 in grading $\le k$. Since $k$ is arbitrary, we deduce that the center vertical arrow is an isomorphism.
\end{proof}

\section{An example when $n=1$, $p=2$, and $X=K(\Z/2,2)$.}\label{explsec}
In \cite{W} and \cite{DW}, the author and, previously, Wilson initiated a partial calculation of $ku_*(K_2)$, where $K_2=K(\Z/2,2)$, in their studies of Stiefel-Whitney classes. In \cite{DW2}, these authors made a complete calculation of $ku^*(K_2)$. Using our new Theorem \ref{main}, we can now give a complete determination of $ku_*(K_2)$, since we know that it is locally finite, as it was noted in \cite{DW} that it contains no infinite groups or infinite $v_1$-towers.

The work in \cite{DW} and \cite{DW2} was done using the Adams spectral sequence. It is interesting to compare the forms of the two Adams spectral sequence $E_\infty$ calculations. What appears as an $h_0$ multiplication in one usually appears as an exotic extension (multiplication by 2 not seen in Ext) in the other. We illustrate here with corresponding small portions of each. The portion of $ku^*(K_2)$ in Figure \ref{fig1} is called $A_5$ in \cite{DW2}. Note that in our $ku^*$ chart, indices increase from right to left. Exotic extensions  appear in red. One should think of the dual of the $ku_*$ chart as an upside-down version of the chart. The dual of the element in position $(30,7)$ in Figure \ref{fig2} is in position $(34,0)$ in Figure \ref{fig2}.

\bigskip
\begin{minipage}{6in}
\begin{fig}\label{fig1}

{\bf A portion of $ku^*(K_2)$}

\begin{center}

\begin{\tz}[scale=.55]
\draw (0,0) -- (2,1) -- (2,0) -- (16,7);
\draw (-1,0) -- (17,0);
\draw (10,0) -- (16,3);
\draw (14,0) -- (16,1) -- (16,0);
\draw [color=red] (10,0) -- (10,4);
\draw [color=red] (14,0) -- (14,2);
\draw [color=red] (16,1) -- (16,3);
\draw [color=red] (12,1) -- (12,5);
\draw [color=red] (14,2) -- (14,6);
\draw [color=red] (16,3) -- (16,7);
\node at (0,-.8) {$36$};
\node at (4,-.8) {$32$};
\node at (8,-.8) {$28$};
\node at (12,-.8) {$24$};
\node at (16,-.8) {$20$};
\node at (0,0) {\sb};
\node at (2,0) {\sb};
\node at (2,1) {\sb};
\node at (4,1) {\sb};
\node at (6,2) {\sb};
\node at (8,3) {\sb};
\node at (10,4) {\sb};
\node at (12,5) {\sb};
\node at (14,6) {\sb};
\node at (16,7) {\sb};
\node at (10,0) {\sb};
\node at (12,1) {\sb};
\node at (14,2) {\sb};
\node at (16,3) {\sb};
\node at (14,0) {\sb};
\node at (16,0) {\sb};
\node at (16,1) {\sb};
\end{\tz}
\end{center}
\end{fig}
\end{minipage}

\bigskip

\bigskip
\begin{minipage}{6in}
\begin{fig}\label{fig2}

{\bf Corresponding portion of $ku_*(K_2)$}

\begin{center}

\begin{\tz}[scale=.55]
\draw (0,3) -- (0,0) -- (14,7);
\draw (0,2) -- (2,3) -- (2,1);
\draw (0,1) -- (6,4) -- (6,3);
\draw (4,2) -- (4,3);
\draw (14,0) -- (16,1);
\draw [color=red] (14,0) -- (14,7);
\node at (0,-.8) {$16$};
\node at (4,-.8) {$20$};
\node at (8,-.8) {$24$};
\node at (12,-.8) {$28$};
\node at (16,-.8) {$32$};
\draw (-1,0) -- (17,0);
\node at (0,0) {\sb};
\node at (0,1) {\sb};
\node at (0,2) {\sb};
\node at (0,3) {\sb};
\node at (2,1) {\sb};
\node at (2,2) {\sb};
\node at (2,3) {\sb};
\node at (4,2) {\sb};
\node at (4,3) {\sb};
\node at (6,3) {\sb};
\node at (6,4) {\sb};
\node at (8,4) {\sb};
\node at (10,5) {\sb};
\node at (12,6) {\sb};
\node at (14,7) {\sb};
\node at (14,0) {\sb};
\node at (16,1) {\sb};
\end{\tz}
\end{center}
\end{fig}
\end{minipage}

\def\line{\rule{.6in}{.6pt}}

\end{document}